\title[Canonical decompositions of groups]{\bf Canonical decompositions of    abelian groups}
\author {Phill Schultz}
\address[phill.schultz@uwa.edu.au]{The University of Western Australia}
\subjclass[2010]{20K15, 20K25, 16D70} \keywords{ finite rank torsion--free abelian group; strongly indecomposable; direct decomposition}
\theoremstyle{plain}
\newtheorem{theorem}{Theorem}[section]
\newtheorem{corollary}[theorem]{Corollary}
\newtheorem{lemma}[theorem]{Lemma}
\newtheorem{proposition}[theorem]{Proposition}
\theoremstyle{definition}
\theoremstyle{remark}
\newtheorem{remark} [theorem]{Remark}
\begin{document}

\renewcommand{\leq}{\leqslant}
\renewcommand{\geq}{\geqslant}
\newcommand{\Aut}{\mathop{\mathrm{Aut}}\nolimits}
\newcommand{\End}{\mathop{\mathrm{End}}\nolimits}
\newcommand{\Ker}{\mathop{\mathrm{Ker}}\nolimits}
\newcommand{\Hom}{\mathop{\mathrm{Hom}}\nolimits}
\newcommand{\Jo}{J\'onsson\ } 
\newcommand{\beglem}{\begin{lemma}\label{}}
\newcommand{\enlem}{\end{lemma}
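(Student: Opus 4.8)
The plan is to move first into the quasi-homomorphism category, whose objects are the finite rank torsion-free abelian groups and whose morphism groups are $\Hom(A,B)\otimes\mathbb Q$, to prove the statement there, and then to transfer it back to genuine groups and homomorphisms. Existence of a decomposition into strongly indecomposable quasi-summands I would handle by induction on $\operatorname{rank}G$: if $G$ is strongly indecomposable there is nothing to prove, and otherwise $G$ is quasi-isomorphic to $A\oplus B$ with $A,B$ nonzero of strictly smaller rank, so the inductive hypothesis applies to each; since quasi-summands of $G$ correspond to idempotents of the finite-dimensional $\mathbb Q$-algebra $\End(G)\otimes\mathbb Q$, the process must terminate, yielding $G\sim\bigoplus_{i=1}^{n}G_i$ with every $G_i$ strongly indecomposable.

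For the uniqueness half I would use the fact that a strongly indecomposable $H$ has local endomorphism ring after tensoring with $\mathbb Q$: $\End(H)\otimes\mathbb Q$ is finite dimensional over $\mathbb Q$, hence has nilpotent Jacobson radical, and its semisimple quotient admits no nontrivial idempotent (such an idempotent would lift and split $H$ in the quasi-category), so it is a division ring. Once each factor has local endomorphism ring, the Krull--Schmidt--Azumaya theorem — equivalently, the exchange property enjoyed by objects with local endomorphism rings — applies inside this additive category and gives uniqueness of $G\sim\bigoplus G_i$ up to reordering and quasi-isomorphism of the factors.

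The real content, and the step I expect to be the main obstacle, is upgrading the quasi-decomposition to a genuine, canonical direct decomposition $G=\bigoplus_j H_j$ of $G$ itself. The subtlety is that $G$ need not be the honest direct sum of its strongly indecomposable quasi-summands (Corner-type phenomena), so one cannot simply split off each quasi-isomorphism type; instead the types must be collected into blocks, two types lying in a common block precisely when some genuine direct decomposition of a summand of $G$ mixes them. I would show that the relation so generated is an equivalence relation, that the resulting summands $H_j$ are carried to summands of the same sort by every automorphism of $G$ (so the decomposition is intrinsic to $G$), and finally that any two such canonical decompositions induce the same partition of the quasi-summand types. This last comparison is the delicate point, and I expect it to require an idempotent-lifting argument in $\End(G)$ itself rather than in $\End(G)\otimes\mathbb Q$, carried out by pulling the two competing decompositions back along a common refinement.
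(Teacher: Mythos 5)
Your proposal does not address the statement it is meant to prove. The lemma in question is the one stated with the notation of Section 4: if $B$ is a basis of $G$ and $(B)_*=\bigoplus_{b\in B}b_*$ is the corresponding range, then $(B)_*=J(G)$ if and only if $(B)_*$ has finite index in $G$. This is essentially a matter of unwinding definitions: a \Jo subgroup is, by definition, a full strong decomposition of finite index in $G$ whose summands are pure in $G$, so one direction is immediate; conversely, a range is by construction a direct sum of rank~1 (hence strongly indecomposable) subgroups each of which is pure in $G$, so it is already a strong decomposition with pure summands, and the only missing requirement for it to be a \Jo subgroup is precisely that it have finite index. Nothing about quasi-endomorphism algebras, Krull--Schmidt--Azumaya, or idempotent lifting is needed or relevant here.

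What you have sketched instead is a proof of the \Jo/Lady quasi-decomposition theory (existence and uniqueness of a decomposition into strongly indecomposable quasi-summands), which this paper simply quotes from \cite{Schultz2020} and does not reprove. Moreover, even judged as a proof of that different statement, your final step is problematic: you propose to upgrade the quasi-decomposition to a genuine canonical direct decomposition of $G$ itself, but no such upgrade exists in general. A group $G\in\calG$ may have no strongly indecomposable direct summand at all (this is exactly the class of groups denoted $G_{ni}$ in the Principal Decomposition), and the correct genuine-group statement is only that $G$ contains a strong decomposition as a subgroup of finite index with pure summands --- the \Jo subgroup $J(G)$ --- together with $G=(J(G))_*$. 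The paper's handling of genuine decompositions goes through splitting off a maximal-rank strong-decomposition summand (Proposition \ref{uniquenes} and the theorem following it), not through lifting idempotents from the quasi-category, and your block-partition construction would need substantial justification that is not supplied.
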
}
\newcommand{\begprop}{\begin{proposition}\label{}}
\newcommand{\enprop}{\end{proposition}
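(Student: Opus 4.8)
The plan is to do all the work inside the quasi--endomorphism algebra $\mathcal{E}=\mathbb{Q}\otimes\End(G)$, which is a finite--dimensional $\mathbb{Q}$--algebra because $G$ has finite rank. First I would set up the familiar dictionary between quasi--decompositions of $G$ and orthogonal idempotent decompositions of the identity of $\mathcal{E}$: a quasi--decomposition $G\sim H_1\oplus\dots\oplus H_n$ corresponds to $1=e_1+\dots+e_n$ with orthogonal idempotents $e_i$ of $\mathcal{E}$ and $H_i$ matched with $e_i\mathcal{E}e_i$, while $G$ is strongly indecomposable exactly when $\mathcal{E}$ is local. As $\mathcal{E}$ is Artinian, its Jacobson radical $J(\mathcal{E})$ is nilpotent and $\mathcal{E}/J(\mathcal{E})\cong\prod_{i=1}^{k}M_{n_i}(D_i)$ for finite--dimensional division algebras $D_i$ over $\mathbb{Q}$, and the primitive idempotents of $\mathcal{E}$ are lifts of those of this semisimple quotient.

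The heart of the argument is to isolate the canonical piece. The central primitive idempotents $\bar f_1,\dots,\bar f_k$ of $\mathcal{E}/J(\mathcal{E})$ are intrinsic to the ring, and because $J(\mathcal{E})$ is nil they lift \emph{uniquely} to (necessarily central) idempotents $f_1,\dots,f_k$ of $\mathcal{E}$. These give a quasi--decomposition $G\sim f_1G\oplus\dots\oplus f_kG$ in which each $f_iG$ is quasi--isomorphic to a power $A_i^{n_i}$ of a strongly indecomposable group $A_i$, with the $A_i$ pairwise non--quasi--isomorphic; one obtains the exponent $n_i$ and the strong indecomposability of $A_i$ by pushing a primitive idempotent beneath $f_i$ and noting that the corner ring $e_i\mathcal{E}e_i$ is local, while orthogonality of the $f_i$ forces $\Hom(A_i,A_j)$ to be quasi--zero for $i\neq j$. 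This is the canonical decomposition: its constituents are determined by $\mathcal{E}$ alone and so independent of every choice, which makes existence and uniqueness (up to quasi--isomorphism and reindexing) of the coarse decomposition immediate, since any competitor produces the same $f_i$; the refinement of each $f_iG$ into $n_i$ strongly indecomposable quasi--summands is then governed by the \Jo theorem.

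Finally I would return to honest statements about $G$. A suitable integer multiple of each $f_i$ lies in $\End(G)$, so the subgroups $G_i=f_iG\cap G$ (formed inside the divisible hull) are fully invariant, independent, and satisfy $mG\subseteq\bigoplus_{i=1}^{k}G_i\subseteq G$ for some integer $m$; thus $\bigoplus_{i=1}^{k}G_i$ is a fully invariant subgroup of bounded index that recaptures $G$ up to quasi--isomorphism. I expect the main obstacle to be precisely this last step together with the attendant question of what ``canonical'' should mean: $G$ need not have the Krull--Remak--Schmidt property, $\bigoplus_{i=1}^{k}G_i$ is in general a proper subgroup of $G$, and the $A_i$ are pinned down only up to quasi--isomorphism, so the delicate point is to formulate and justify the sharpest invariance statement the group actually supports rather than to extract an internal direct decomposition of $G$ itself. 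The rest is routine bookkeeping with the Wedderburn decomposition and with the \Jo theorem.
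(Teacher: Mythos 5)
There is a genuine gap: what you prove is J\'onsson's classical theorem --- existence and uniqueness, \emph{up to quasi-isomorphism}, of a quasi-decomposition of $G$ into strongly indecomposable quasi-summands, obtained from central idempotents of the finite-dimensional algebra $\bbQ\otimes\End(G)$ --- but the proposition at issue is about honest direct decompositions $G=D\oplus B=A\oplus C$, where $D,A$ are strong decompositions (genuine direct sums of si subgroups of $G$) and $B,C$ have no si summand, and its conclusion is $D\cong A$ and $B\cong_{ss}C$, i.e.\ isomorphism and strongly stable isomorphism, not quasi-isomorphism. Your construction only returns subgroups $G_i$ with $mG\subseteq\bigoplus_i G_i\subseteq G$, which in general are not summands of $G$ at all, and you acknowledge this yourself in the final paragraph; so the step from the canonical quasi-decomposition back to the actual summands $D,B,A,C$ of $G$ --- which is the entire content of the proposition --- is missing. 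The hypothesis that $B$ and $C$ have no si summand is never used, and the conclusion $B\cong_{ss}C$ (exhibiting a strong decomposition $K$ with $K\oplus B\cong K\oplus C$) is never addressed; quasi-isomorphism of the complements is strictly weaker and cannot be upgraded by information from $\bbQ\otimes\End(G)$ alone, since that algebra is a quasi-isomorphism invariant.

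The paper's own proof stays entirely at the group level and is much shorter: apply the projections $\pi_A,\pi_C$ of the second decomposition to $D$, so $D=D\pi_A\oplus D\pi_C$ up to the \Jo subgroup, i.e.\ $D=J(D)\cong J(D\pi_A)\oplus J(D\pi_C)$; since $C$ has no si summand, $D\cong J(D\pi_A)$ embeds as a summand of $A$, and symmetrically $A$ embeds as a summand of $D$, whence $D\cong A$ by finite rank. Then $D\oplus B\cong A\oplus C$ with $D\cong A$ a strong decomposition gives $B\cong_{ss}C$ directly from the definition. The ingredients doing the work there --- the \Jo subgroup $J(\cdot)$, purity of its si summands, and the uniqueness of strong decompositions up to isomorphism cited from Fuchs --- are exactly the devices that convert quasi-level information into isomorphism-level conclusions, and they have no counterpart in your proposal. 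If you want to salvage your approach, you would need to add precisely such a bridge (e.g.\ show that your canonical idempotents can be realized by the given projections and that the resulting pieces of the given decompositions are strong decompositions pure in $G$), at which point you are essentially reconstructing the paper's argument.
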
}
\newcommand{\begpr}{\begin{proof}}
\newcommand{\enpr}{\end{proof}}

\newcommand\bbQ{{\mathbb{Q}}}
\newcommand\bbZ{{\mathbb{Z}}}
\newcommand\bbN{{\mathbb{N}}}
\newcommand\bbT{{\mathbb{T}}}
\newcommand{\bbP}{\mathbb{P}}

\newcommand{\calQ}{{\mathcal Q}}
\newcommand{\calP}{\mathcal{P}}
\newcommand{\calC}{\mathcal{C}}
\newcommand{\calD}{\mathcal{D}}
\newcommand{\calA}{\mathcal{A}}
\newcommand{\calH}{\mathcal{H}}
\newcommand{\calS}{\mathcal{S}}
\newcommand{\calB}{\mathcal{B}}
\newcommand{\calX}{\mathcal{X}}
\newcommand{\calY}{\mathcal{Y}}
\newcommand{\calL}{\mathcal{L}}
\newcommand{\calM}{\mathcal{M}}
\newcommand{\calG}{\mathcal{G}}
\newcommand{\calT}{\mathcal{T}}
\newcommand{\calI}{\mathcal{I}}
\newcommand{\calJ}{\mathcal{J}}
\newcommand{\calE}{\mathcal{E}}
\newcommand{\calK}{\mathcal{K}}
\newcommand{\calF}{\mathcal{F}}
\newcommand{\calV}{\mathcal{V}}
\newcommand{\calU}{\mathcal{U}}
\newcommand{\calCD}{\mathcal{CD}}
\newcommand{\calR}{\mathcal{R}}

 \newcommand{\ov}{\overline}
\newcommand{\rank}{\operatorname{rank}}
\newcommand{\type}{\operatorname{type}}
\newcommand{\lcm}{\operatorname{lcm}}
\newcommand{\Jon}{\operatorname{Jon}}
\newcommand{\Add}{\operatorname{Add}}

\newcommand{\la}{\langle} 
\newcommand{\ra}{\rangle}

\begin{abstract}
\noindent
Every torsion--free abelian group of finite rank has two essentially unique complete
direct decompositions whose summands come from specific classes of groups. 
\end{abstract} 
 \maketitle

\section{Introduction}   

An intriguing feature of abelian group theory is that torsion--free groups, even those of finite rank, may fail to have unique complete decompositions; see for example
 \cite[Chapter  12, \S5]{Fuchs} or  \cite{MS2018}).  Therefore it is interesting to show  that  every such group has   essentially unique complete decompositions with summands from certain identifiable classes. For example, it was shown in \cite{MS2018} that every finite rank torsion--free group $G$ has  a Main Decomposition,  $G=G_{cd}\oplus G_{cl}$ where $G_{cd}$ is completely decomposable and $G_{cl}$ has no rank 1 direct summand; $G_{cd}$ is unique up to isomorphism and $G_{cl}$ unique up to near isomorphism.
  
 Let $\calG$ be the class of  torsion--free abelian groups of finite rank and let $G\in\calG$. The aim of this paper is to show that:  
 \begin{itemize}\item   $G=G_{sd}\oplus G_{ni}$ where   $G_{sd}$ is a direct sum of  strongly indecomposable  groups and   $G_{ni}$ has no strongly indecomposable direct summand.  
  $G_{sd}$ is unique up to isomorphism and $G_{ni}$ up to near isomorphism.
 \item  $G=G^{(fq)}\oplus G^{(rq)}\oplus G^{(dq)}$, where $G^{(fq)}$ is an extension of a  completely decomposable group by a finite group,   
 $G^{(rq)}$ is an extension of a completely decomposable group by an infinite reduced torsion group, and 
 $G^{(dq)}$ is an extension of a completely decomposable group by an infinite torsion group which has a divisible summand. 
Each of the three summands is unique up to isomorphism. 
   \end{itemize}
  
  The first case, dealt with in Section 3, I call    a \textit{Principal Decomposition}. Since rank 1 groups are   strongly indecomposable, a Principal Decomposition is a refinement of a Main Decomposition;  the second case, the subject of Section 5,  I call a \textit{Torsion Quotient Decomposition}.  Of course the Principal Decomposition can be applied to each summand of the Torsion  Quotient Decomposition and \textit{vice versa}, leading to    decompositions of the form $\bigoplus G^{(i)}_j$, and it does not matter in which order the canonical decompositions are applied.

 Although this paper is intended to address   particular problems in abelian group theory, in Section 6, I show that the results can be couched in categorical form, so they have immediate applications to larger classes of modules, for example torsion--free finite rank modules over an integral domain.

\section{Notation}

Unless otherwise signalled, the word \lq group\rq\  denotes torsion--free abelian group of finite rank. Every   group   is isomorphic to a  subgroup of the infinite dimensional  rational vector space $V=\bbQ^{\bbN}$;  define $\calG$ to be the set of finite rank subgroups of $V$.   If $S\subseteq V,\ \la S\ra$ denotes the additive subgroup generated by $S,\ [S]$ the subspace generated by $S$ and $S^G_*$ the group $[S]\cap G$. Thus $[G]$ is the subspace of $V$ generated by $G$ and if $S\subseteq G$ then $S^G_*$ is the pure subgroup of $G$ generated by $S$.  When there is no ambiguity, we just write $S_*$, and we adopt the usual convention on singletons, namely $b_*=\la b\ra_*$ for $b\in G$.

Apart from side--stepping foundational problems,  these conventions have  the   advantage   that homomorphisms $G\to H$ can be identified with their unique extensions $[G]\to [H]$. In particular,  groups $G$ and $H$ are isomorphic if and only if there is an automorphism of $V$ mapping $G$ onto $H$, so the automorphism group $\Aut(G)$ coincides with the the stabilizer $\Aut_G([G])$ of the non--singular linear transformations of $[G]$. If $r\in\bbQ^*$, the non--zero rationals, then $rG\in\calG$. Multiplication by $r=a/b$ is an automorphism of $[G]$ which is an automorphism of $G$ when $G$ is divisible by  $b$. By $r=a/b$,  we intend that $0\ne a\in\bbZ$ and $0\ne b\in\bbN$ with $\gcd\{a,\,b\}=1$.
Another advantage of our conventions is that integrally independent subsets of $G$ are rationally independent subsets of $[G]$ contained in $G$, so we may omit the first adjective. The \textit{rank} of $G$ is the cardinality of some, and hence every,  maximal independent subset.

 Since every group is a direct sum of a unique divisible and a reduced group,  we limit consideration to reduced groups. A decomposition of  $G$ into non--zero direct summands is called \textit{complete} if these summands are indecomposable. Since $\rank(G)$ is finite,   $G$ always has complete decompositions, in general not unique even up to isomorphism. 

A rank 1 group is a group isomorphic to a non--zero subgroup of the rationals $\bbQ$, a completely decomposable (cd) group is a direct sum of rank 1  groups, and an  almost completely decomposable (acd) group is one  containing a completely decomposable group as a subgroup of finite index.

Groups $G$ and $H$ in $\calG$ are \textit{quasi--equal}, denoted $G\dot=H$, if there exists $r \in\bbQ^*$ such that $rG=H$,   and \textit{quasi--isomorphic}, denoted  $G\approx H$,  if $G'\cong H'$ for some $G'\dot=G$ and $H'\dot= H$.  Thus if $H$ and $K$ are pure subgroups of $G$ then $H\dot=K$ if and only if $H=K$. Clearly quasi--isomorphic groups have the same rank.

$G$ is \textit{quasi--decomposable} if $G\dot =H\oplus K$ for non--zero $H$ and $K\in\calG$, and $H$ and $K$ are called \text{quasi--summands}.
$G$ is \textit{strongly indecomposable } (si) if it has no proper quasi--decomposition. Equivalently, $G$ is si if and only if for all $k\in\bbN,\ kG$ is indecomposable. 
A direct sum $\oplus_{i\in[t]} H_i$ with each $H_i$ si, is called a \textit{strong decomposition}. It follows from \cite[Theorem 9.9]{Fuchs} that strong decompositions are unique up to isomorphism, i.e., if $\oplus_i H_i=\oplus_j K_j$ are strong decompositions, then their summands can be paired so that $H_i\cong K_j$.

 It was shown in \cite[Section 6]{Schultz2020} that every non--zero $G\in\calG$ contains   as a subgroup of finite index a strong decomposition $\oplus_{i\in[t]} H_i$  in which each $H_i$ is pure in $G$.  
 We call such a subgroup a   \textit{\Jo subgroup of $G$}, denoted $J(G)$. By the finite index property, $G=(J(G))_*$.
  For example, by \cite[Chapter 4]{M2000}, if $G$ is an acd group, then regulating subgroups are \Jo subgroups.
  
Clearly rank 1 groups are  si and   it was shown in \cite[Theorem 7.2]{Schultz2020} that $J\in\calG$ of rank $>1$ is si if and only if $J$ is an extension of a cd group $H$ by an infinite torsion group $T$ such that $J/H$ has no decomposition lifting to $J$.  

Extending the well known notion of type (\cite[\S2.2]{M2000}) from rank 1 groups to si groups in general, the  isomorphism class of an si group is called a \textit{Type}, so types are  special cases of Types. Denote by $\calT(D)$ the set of Types of a strong decomposition   $D$. By Lady's Theorem \cite[Corollary 6.5]{Schultz2020}, $\calT(D)$ is finite and a strong decomposition has the form $D=\bigoplus_{J\in\calT} J^{n_J}$ where $\calT=\calT(D)$ and each $n_J\in\bbN$.  Such a decomposition, which is unique up to isomorphism,  is  called a \textit{homogeneous decomposition} of $D$, and the summands $ J^{n_J}$ are called the \textit{homogeneous parts} of $D$.

  Near isomorphism is an equivalence on $\calG$ weaker than isomorphism but stronger than quasi--isomorphism. There are several equivalent characterisations, see for example \cite[\S 9.1]{M2000}, of which the most useful in our case is that $G$ is nearly isomorphic to $H$ if there exists $K\in\calG$ such that $K\oplus G\cong K\oplus H$.

We define an ostensibly stronger equivalence by   defining groups $G$ and $H$ to be 
\textit{strongly stably isomorphic}, denoted $G\cong_{ss}H$, if there exists a strong decomposition $K$ such that $K\oplus G\cong K\oplus H$.  It is known, \cite[Corollary 9.1.9]{M2000}, that for acd groups, near and strongly stable isomorphism are equivalent; however, it is still undetermined whether there exist nearly isomorphic groups $G$ and $H$ which fail to be strongly stably isomorphic.

A subgroup $H$ of $G$ is \textit{full} if $\rank(H)=\rank(G)$; equivalently, $H$ is full in $G$ if and only if  $G/H$ is torsion, and hence if and only if $H_*=G$. In particular, $J(G)$ is full in $G$.

Unexplained notation comes from the standard references \cite{Fuchs} or\hfill\break \cite{M2000}.

\section{The Principal Decomposition} Let $H$ be a strong decomposition with Type set $\calJ$,  so that $H$ has a homogeneous decomposition $\bigoplus_{J\in\calJ}J^{n_J}$.   By the uniqueness property of strong decompositions, each decomposition $H=L\oplus K$    has the form
$L=\bigoplus_{J\in\calJ}J^{m_J}$ and $K =\bigoplus_{J\in\calJ}J^{\ell_J} $ where $m_J$ and $\ell_J$ are non--negative integers such that $m_J+\ell_J=n_J$.
  
Recall that  a \textit{Principal Decomposition} of $G\in \calG$ is a decomposition $G=G_{sd}\oplus G_{ni}$ in which $G_{sd}$ is a strong decomposition and $G_{ni}$ has no si summand.

\begprop \label{uniquenes} Let 
$G=D\oplus B=A\oplus C$ where $D$ and $A$ are   strong decompositions and $B$ and $C$ have no si summand. Then $D\cong A$ and $B\cong_{ss} C$.

\enprop
\begpr 
  Let $\pi_A$ and $\pi_C$ be the projections  corresponding to the second decomposition.  
Now   $D=D(\pi_A+\pi_C)=D\pi_A\oplus D\pi_C$ so $D=J(D)\cong J(D\pi_A)\oplus J(D\pi_C)$. Since $D$ is a strong decomposition while 
   $C$ contains no si summand, $D\cong  J(D\pi_A)$ is isomorphic to a summand of    $A$.  Similarly, $A$ is isomorphic to a summand of $D$. Since $G$ has finite rank, $A\cong D$ and hence by definition, $B\cong_{ss} C$.
\enpr

  \begin{theorem} Every $G\in\calG$ has a Principal Decomposition $G=G_{sd}\oplus G_{ni}$, in which  $G_{sd}$ is unique up to isomorphism, and $G_{ni}$ up to strongly stable isomorphism.

\end{theorem}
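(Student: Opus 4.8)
The uniqueness half is already in hand: it is exactly Proposition~\ref{uniquenes} applied to two Principal Decompositions $G=G_{sd}\oplus G_{ni}=H_{sd}\oplus H_{ni}$, which yields $G_{sd}\cong H_{sd}$ and $G_{ni}\cong_{ss}H_{ni}$. So the whole task is to establish existence, and the plan is to do this by induction on $n=\rank(G)$.

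The base case $n=0$ is trivial ($G=0=0\oplus 0$). For the inductive step, assuming the result for all groups of rank $<n$, I would split into two cases according to whether or not $G$ possesses a strongly indecomposable direct summand. If it does not, then $G=0\oplus G$ is already a Principal Decomposition, with $G_{sd}=0$ the empty strong decomposition. If it does, pick such a summand $S$ and write $G=S\oplus G'$; since $S\ne 0$ we have $\rank(G')<n$, so the inductive hypothesis furnishes a Principal Decomposition $G'=G'_{sd}\oplus G'_{ni}$. I would then set $G_{sd}=S\oplus G'_{sd}$ and $G_{ni}=G'_{ni}$. Here $G_{ni}$ inherits from $G'_{ni}$ the property of having no si summand, and $G_{sd}$ is a finite direct sum of si groups --- namely $S$ together with the si summands making up $G'_{sd}$ --- hence a strong decomposition by definition; so $G=G_{sd}\oplus G_{ni}$ is a Principal Decomposition and the induction closes. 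Iterating, this amounts to peeling si direct summands off $G$ one at a time until a group with no si summand remains, the process terminating because the rank strictly drops at each step.

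I do not expect a serious obstacle here; the one thing to be careful about is that one cannot shortcut the argument by fixing an arbitrary complete decomposition $G=\bigoplus_i G_i$ and declaring $G_{sd}$ to be the sum of those $G_i$ that are si. Because complete decompositions of finite rank torsion--free groups are badly non--unique, the complementary sum of the non--si indecomposable pieces is not guaranteed to be free of si direct summands, and so need not qualify as $G_{ni}$. The inductive peeling avoids this entirely: it always recurses on a genuine direct complement, and it halts exactly when no si summand is left, so the resulting $G_{ni}$ is correct by construction; its uniqueness up to strongly stable isomorphism, and the uniqueness of $G_{sd}$ up to isomorphism, are then exactly what Proposition~\ref{uniquenes} provides.
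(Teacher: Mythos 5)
Your proposal is correct and takes essentially the paper's approach: uniqueness comes from Proposition \ref{uniquenes} exactly as in the paper, and your rank induction (peeling off one si summand at a time) is just an iterated form of the paper's one-step existence argument, which chooses a strong decomposition summand $G_{sd}$ of maximal rank and takes any complement as $G_{ni}$. Your cautionary remark about not extracting the si pieces from a fixed complete decomposition is well taken, and both arguments avoid that pitfall in the same way.
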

\begpr  The theorem is vacuously true if $G$ is a strong decomposition or  has no strongly indecomposable summands, so assume neither of these extreme cases occur.

Let $\calJ$ be the set of all strong decompositions which are  summands of $G$.    Let $G_{sd}$ be an element of $\calJ$ of maximum rank, and let  $ G_{ni} $ be any direct complement of $G_{sd}$. Then $G_{ni}$ has no strongly decomposable summand, so this is a Principal Decomposition. By Proposition \ref{uniquenes}, $G_{sd}$ is unique up to isomorphism, and $G_{ni}$ up to strongly stable isomorphism.
 \enpr
 \begin{corollary} All  summands of $G$ which are maximum rank strong decompositions are isomorphic.
 \qed \end{corollary}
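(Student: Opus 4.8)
The plan is to reduce the statement directly to the Theorem. Let $S$ be any direct summand of $G$ which is a strong decomposition of maximum rank, and choose a direct complement $T$, so that $G=S\oplus T$. The first step is to check that $T$ has no strongly indecomposable summand. Indeed, if $T=U\oplus W$ with $U$ strongly indecomposable, then $S\oplus U$ is again a direct sum of strongly indecomposable groups, hence a strong decomposition, and it is a summand of $G$ of rank $\rank(S)+\rank(U)>\rank(S)$, contradicting the maximality of $\rank(S)$ among the elements of $\calJ$. So $T$ has no si summand.

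Hence $G=S\oplus T$ is a Principal Decomposition of $G$. Applying Proposition \ref{uniquenes} to this decomposition together with the distinguished one $G=G_{sd}\oplus G_{ni}$ produced in the Theorem (both of whose second factors have no si summand), we obtain $S\cong G_{sd}$. Since $G_{sd}$ is itself a maximum rank strong decomposition summand of $G$, this shows that every such summand is isomorphic to $G_{sd}$, and therefore any two of them are isomorphic to each other.

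There is essentially no obstacle here; the only point requiring (routine) care is the observation in the first step that enlarging a strong decomposition summand by a strongly indecomposable summand of its complement again yields a strong decomposition summand, which is precisely what forces a maximum rank strong decomposition summand to have a complement with no si summand, i.e.\ to play the role of $G_{sd}$.
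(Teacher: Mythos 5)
Your proof is correct and follows essentially the route the paper intends: any maximum rank strong decomposition summand has a complement with no si summand (which is exactly how the Theorem constructs $G_{sd}$), so it appears as the first factor of a Principal Decomposition, and Proposition \ref{uniquenes} then forces all such summands to be isomorphic. You even supply the small justification (enlarging by an si summand of the complement would contradict maximality of rank) that the paper leaves implicit.
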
 
 
\begin{remark} Principal Decompositions do not respect direct decompositions, in the following sense:
  Let $G=H\oplus K\in\calG$. Then in general, $G_{sd}\ne H_{sd}\oplus K_{sd}$. For example there is a well known example \cite[Theorem 5.2]{Fuchs} of a rank 4 group $G$ having a  Principal Decomposition with $\rank(G_{sd})=1$ such that $G=H\oplus K$ where both $H$ and $K$ are indecomposable of rank 2.\end{remark}

\section{ Ranges and Torsion Quotients}

We call a maximal independent subset of $G$ a \textit{basis}.  Bases of groups were studied in \cite[Proposition 2.2]{Schultz2020}, where it was shown that if $B$ is a basis of $G$ then every $0\ne a\in G$ has a unique representation  $a= k^{-1}\sum_{b\in B} n_bb$ where $n_b\in\bbZ,\ k\in\bbN$ and $\gcd\{k,\, n_b\colon b\in B\}=1$. 
 
A  \textit{range}  of $G$ is a full cd subgroup whose the rank 1 summands are pure in $G$. If  $B$ is a basis of $G$, the group $(B)_*=\bigoplus_{b\in B}b_*$ is a range,   and all ranges are of the form $(B)_*$ for some basis $B$. 

Since $(B)_*$ is full in $G$, $G/(B)_*$ is a torsion group, called the \textit{torsion quotient of $G$ with respect to $(B)_*$}. 
\beglem With the notation above,  $(B)_*=J(G)$ if and only if $(B)_*$ has finite index in $G$.

\enlem

\begpr If $(B)_*=J(G)$, then by definition, $(B)_*$ has finite index in $G$. Conversely, since $(B)_*$ is a strong decomposition  each of whose summands is pure in $G$, if   
$(B)_*$ has finite index in $G$ then $(B)_*=J(G)$. 
\enpr 

 \begprop \label{let J} Let $G\in\calG$  have basis $B$ such that $G/(B)_*$ is infinite and reduced. Then for every basis $C,\ G/(C)_*$ is infinite and reduced. 
 \enprop
\begpr  Let $\{x_i+(B)_*\colon i\in\bbN\}$ be an infinite independent set in  $G/(B)_*$. Suppose by way of contradiction that $\{x_i+(C)_*\colon i\in\bbN\}$ is dependent in   $G/(C)_*$, say $\sum_{j\in [n]}r_jx_j=c\in(C)_*$. Let $c$ have $B$--representation $k^{-1}\sum_{i\in[t]}n_ib_i$. Then $\sum_{j\in [n]}r_jx_j\in(B)_*$, contradicting independence of $\{x_i+(B)_*\}$.

Now suppose   that $G/(C)_*$ has a divisible summand $\cong \bbZ(p^\infty)$ for some prime $p$.  Then there exist $\{y_i\colon i\in\bbN\}$ in $G$ such that $py_0\in (C)_*$ and $py_{i+1}-y_i\in(C)_*$.   Each $y_i$ has $B$--representation $z_i=k_i^{-1}\sum_{j\in[t_i]}n_{ij}b_{ij}$, so that $\{z_0,\, z_i\colon i\in\bbN\}$  generates a summand of $G/(B)_*\cong \bbZ(p^\infty)$.  
This contradiction implies that $G/(C)_*$ is reduced.
 \enpr
 
 \begin{corollary} \label{basisinv}Let $G$ have torsion quotient $T=G/(B)_*$. Then the property that $T$ is finite or has a divisible summand is independent of the choice of basis $B$.
 \qed \end{corollary}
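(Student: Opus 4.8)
The plan is to read Corollary~\ref{basisinv} off Proposition~\ref{let J} by contraposition, after translating the phrase ``$T$ is finite or has a divisible summand'' into the vocabulary of that proposition.

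First I would record the elementary observation that, for a torsion abelian group $T$, the disjunction ``$T$ is finite, or $T$ has a divisible summand'' is exactly the negation of ``$T$ is infinite and reduced''. Indeed, a finite group is reduced and a group with a summand isomorphic to $\bbZ(p^\infty)$ is infinite, so each disjunct on its own prevents $T$ from being at once infinite and reduced. Conversely, writing $T=D\oplus R$ with $D$ divisible and $R$ reduced (valid for any abelian group), one sees that if $T$ is not reduced then $D\neq 0$ is a divisible summand; hence whenever $T$ fails to be infinite-and-reduced it is either finite or has a divisible summand.

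Next I would apply Proposition~\ref{let J} as it stands: if $G/(B)_*$ is infinite and reduced for one basis $B$, then $G/(C)_*$ is infinite and reduced for every basis $C$. So, for a fixed $G$, the property ``the torsion quotient of $G$ is infinite and reduced'' holds either for all bases or for none; negating and invoking the equivalence of the first step, the property ``the torsion quotient of $G$ is finite or has a divisible summand'' likewise holds either for all bases or for none. That is precisely the assertion of the corollary.

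I do not expect a genuine obstacle, since all the content is already contained in Proposition~\ref{let J}; the only point demanding care is the identification, in the first step, of ``not reduced'' with ``has a divisible summand'' for torsion groups. What is worth flagging, rather than surmounting, is that the two disjuncts are \emph{not} separately basis-independent, so the statement cannot be tightened: for distinct primes $p,q$, the reduced rank-$2$ group $G=\bbZ_{(p)}\oplus\bbZ_{(q)}$ (where $\bbZ_{(p)}$ denotes the rationals with denominator prime to $p$) has the range $(B)_*=\la(1,0)\ra_*\oplus\la(0,1)\ra_*=G$, with trivial torsion quotient, and also the range $\la(1,1)\ra_*\oplus\la(0,1)\ra_*$, whose torsion quotient one checks to be isomorphic to $\bbZ(q^\infty)$.
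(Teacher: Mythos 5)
Your proof is correct and is essentially the paper's own argument: the corollary is stated there with no separate proof precisely because it is the contrapositive of Proposition~\ref{let J}, once one notes that for a torsion group ``finite or has a divisible summand'' is the negation of ``infinite and reduced'' (using the divisible-plus-reduced decomposition). Your closing example, showing the two disjuncts are not individually basis-independent, is a correct and worthwhile extra observation but not needed for the statement.
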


 A useful property relating decompositions of $G$ to decompositions of $J(G)$,  proved in \cite[Proposition 6.4]{Schultz2020}, is the following:
 \begin{proposition}\label{useful} Let $G\in\calG$.
 \begin{enumerate}\item If $G=H\oplus K$, then $J(G)=J(H)\oplus J(K)$.
 
 \item If $J(G)= L\oplus M$, then $G=L_*\oplus M_*$ if and only if $L_*\oplus M_*$ is pure in $G$.
 
\qed \end{enumerate}\end{proposition}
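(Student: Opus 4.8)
The plan is to read off both parts directly from the characterisation of \Jo subgroups recorded in the excerpt: a subgroup of $G$ is a \Jo subgroup precisely when it has finite index in $G$, is a strong decomposition, and has each of its si summands pure in $G$. The only other ingredient is the elementary fact that a pure subgroup of finite index in a torsion--free group is the whole group (if $H\leq G$, $G/H$ finite and $H$ pure, then for $g\in G$ some $ng\in H$, and purity forces $ng\in nH$, so $g\in H$ since $G$ is torsion--free).

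For (1), suppose $G=H\oplus K$ and fix \Jo subgroups $J(H)\leq H$, $J(K)\leq K$. I would show $J(H)\oplus J(K)$ qualifies as a \Jo subgroup of $G$: it is a direct sum of si groups, hence a strong decomposition; each of its si summands is pure in $H$ or in $K$, and $H,K$ are pure in $G$ as direct summands, so by transitivity of purity each summand is pure in $G$; and $[G:J(H)\oplus J(K)]=[H:J(H)]\,[K:J(K)]$ is finite. Hence one may take $J(G)=J(H)\oplus J(K)$, which is the assertion; if $J(G)$ has been chosen beforehand, uniqueness of strong decompositions gives $J(G)\cong J(H)\oplus J(K)$.

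For (2), the forward implication is immediate, since $G$ is pure in itself. For the converse, assume $J(G)=L\oplus M$ and that $L_*\oplus M_*$ is pure in $G$. First I would check the sum $L_*+M_*$ really is direct: since $J(G)$ is full in $G$ we have $[L]+[M]=[L\oplus M]=[J(G)]=[G]$ and $\rank L+\rank M=\rank G$, so a dimension count over $\bbQ$ gives $[L]\cap[M]=0$; as $L_*=[L]\cap G$ and $M_*=[M]\cap G$, this yields $L_*\cap M_*=0$. Now $L_*\oplus M_*$ contains $L\oplus M=J(G)$, which has finite index in $G$, so $L_*\oplus M_*$ has finite index in $G$; being also pure, it must equal $G$, i.e.\ $G=L_*\oplus M_*$.

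There is no genuine obstacle here; the one point requiring care is the bookkeeping around the symbol $J(G)$, since \Jo subgroups are canonical only as abstract strong decompositions (up to isomorphism) and not as literal subgroups, so the equality in (1) should be understood as holding for a suitable choice of $J(G)$, and this should be stated explicitly. Apart from that, the argument is just transitivity of purity, the rank count, and the finite-index/purity fact.
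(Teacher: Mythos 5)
Your proof is correct. Note, however, that the paper itself offers no argument for this proposition: it is quoted as a known result, cited to \cite[Proposition 6.4]{Schultz2020}, and stamped with \qed, so there is no internal proof to compare yours against. Judged on its own terms, your verification is sound and works directly from the paper's definition of a \Jo subgroup (finite index, strong decomposition, si summands pure in $G$): in (1) the transitivity of purity through the pure summands $H$ and $K$ and the finiteness of $[G:J(H)\oplus J(K)]$ do exactly what is needed, and your remark that the equality must be read as ``$J(H)\oplus J(K)$ is a legitimate choice of $J(G)$'' is the right way to handle the fact that \Jo subgroups are not canonical as literal subgroups. In (2) the three ingredients -- the dimension count giving $[L]\cap[M]=0$ and hence $L_*\cap M_*=0$, the observation that $L_*\oplus M_*$ contains the finite--index subgroup $J(G)$, and the fact that a pure subgroup of finite index in a torsion--free group is the whole group -- are each correct and together close the converse; the forward direction is indeed trivial. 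So the proposal is a complete, self-contained substitute for the external citation, at the cost only of spelling out facts (transitivity of purity, the pure-plus-finite-index lemma) that the cited source presumably also uses.
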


\begprop\label{letJ} Let $J$ be a strongly indecomposable  group. Then either $J$ has rank 1, or  for every basis $B$ of $J,\ J/(B)_*$ is   infinite and has no decomposition lifting to $J$. 
\enprop

\begpr Rank 1 groups are evidently si, so assume $\rank(J)>1$.  Let $B$ be a basis of $J$, so $(B)_*$ is a full cd subgroup of $J$ and hence $T=J/(B)_*$ is torsion. If $T$ is finite, then $J\dot= (B)_*$  so $\rank(J)=1$.

Suppose then that $T$ is infinite. If $T$ has no decomposition lifting to $J$, then by \cite[Theorem 7.2]{Schultz2020}, $J$ is 
si, while if $T$ has   such a decomposition, then by definition, $J$ is decomposable, a contradiction.
\enpr

 \section{The Torsion Quotient Decomposition} 
 We shall use the following notation:
 
\begin{itemize}\item
  $\calJ\colon $   the set of  strongly indecomposable groups in $\calG$, and $\oplus\calJ\colon $ finite direct sums from $\calJ$, i.e., strong decompositions;
\item $\calJ^{(fq)}\colon$ the subset of strong decompositions in $\calJ$ having finite torsion quotient, and $\oplus\calJ^{(fq)}\colon $ finite direct sums from $\calJ$;
\item $\calJ^{(rq)}\colon$ the subset of  strong decompositions in $\calJ$ having reduced torsion quotient, and $\oplus\calJ^{(rq)}\colon $ finite direct sums from $\calJ$.
\end{itemize} 
  
 \begin{proposition}\label{hierarchy} The three sets in the  hierarchy of subclasses 
 \[\oplus\calJ^{(fq)}\subset\oplus\calJ^{(rq)} \subset\oplus \calJ,\]   are closed under isomorphism,   direct sums and direct summands.
  \end{proposition}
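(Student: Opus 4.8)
The key observation is that the torsion quotient of a strong decomposition splits along the summands. If $G=\bigoplus_i L_i$ with each $L_i$ strongly indecomposable, and $B=\bigcup_i B_i$ is a union of bases $B_i$ of the $L_i$, then $B$ is a basis of $G$; and since each $L_i$ is pure in $G$, the pure hull in $G$ of any $b\in B_i$ coincides with its pure hull in $L_i$. Hence the range $(B)_*$ equals $\bigoplus_i (B_i)_*$ and
\[
G/(B)_*\ \cong\ \bigoplus_i L_i/(B_i)_*.
\]
Because finiteness and reducedness of a torsion group are inherited by direct summands and, conversely, hold for a finite direct sum as soon as they hold for each summand, $G/(B)_*$ is finite (resp.\ reduced) exactly when every $L_i/(B_i)_*$ is; and by Proposition \ref{letJ}, Proposition \ref{let J} and Corollary \ref{basisinv} these conditions are intrinsic to the isomorphism type of $L_i$ (the quotient is $0$ when $\rank L_i=1$, and infinite when $\rank L_i>1$, in which case whether it is reduced or has a divisible summand is independent of $B_i$). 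So $\oplus\calJ^{(fq)}$ and $\oplus\calJ^{(rq)}$ are precisely the strong decompositions all of whose strongly indecomposable summands have, respectively, finite or reduced torsion quotient --- a property which, by the uniqueness up to isomorphism of strong decompositions, depends only on the group; in particular all three classes are closed under isomorphism. The displayed inclusions hold because finite torsion groups are reduced, and are proper (rank $1$ groups lie in $\oplus\calJ^{(fq)}$, while strongly indecomposable groups of rank $>1$ exist both with reduced torsion quotient and with torsion quotient having a summand $\cong\bbZ(p^\infty)$). Finally, closure under finite direct sums is immediate: if $G=\bigoplus_i L_i$ and $G'=\bigoplus_j M_j$ are strong decompositions then $G\oplus G'=\bigoplus_i L_i\oplus\bigoplus_j M_j$ is a strong decomposition whose strongly indecomposable summands are the $L_i$ together with the $M_j$, so if each of these has finite (resp.\ reduced) torsion quotient then so does $G\oplus G'$.

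The substantive point is closure under direct summands, for which I would first prove the lemma that \emph{a direct summand of a strong decomposition is again a strong decomposition}. Let $G=H\oplus K$ with $G$ a strong decomposition. Since the summands of $G$ are pure in it, $G$ is itself a \Jo subgroup of $G$, so we may take $J(G)=G$ --- exactly the step used in the proof of Proposition \ref{uniquenes}. By Proposition \ref{useful}(1) there are \Jo subgroups $J(H)\subseteq H$ and $J(K)\subseteq K$ with $G=J(G)=J(H)\oplus J(K)$. As $J(H)\cap J(K)\subseteq H\cap K=0$, this is an internal direct sum inside $G=H\oplus K$; so for $h\in H$, writing $h=h'+k'$ with $h'\in J(H)\subseteq H$ and $k'\in J(K)\subseteq K$ forces $k'=h-h'\in H\cap K=0$ and $h=h'\in J(H)$. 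Thus $H=J(H)$, and likewise $K=J(K)$, are strong decompositions.

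Granting the lemma, closure under direct summands follows. Suppose $G=H\oplus K\in\oplus\calJ^{(fq)}$ (resp.\ $\oplus\calJ^{(rq)}$). Then $H$ and $K$ are strong decompositions, and every strongly indecomposable summand of $H$ (and of $K$), being a direct summand of $H$, is a direct summand of $G$, hence --- by the uniqueness up to isomorphism of strong decompositions --- isomorphic to one of the strongly indecomposable summands of $G$, and so has finite (resp.\ reduced) torsion quotient. Therefore $H,K\in\oplus\calJ^{(fq)}$ (resp.\ $\oplus\calJ^{(rq)}$). I expect the only genuinely non--routine step to be the lemma of the previous paragraph, which leans essentially on Proposition \ref{useful}(1) together with the fact that a strong decomposition is its own \Jo subgroup; the remaining arguments are formal, using only the isomorphism--invariance of the torsion--quotient conditions and the uniqueness of strong decompositions.
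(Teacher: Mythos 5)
Most of your argument runs parallel to the paper's: the adapted basis $B=C\cup D$ splitting the torsion quotient, heredity of finiteness and reducedness under torsion summands, and the basis--independence results (Propositions \ref{let J}, \ref{letJ}, Corollary \ref{basisinv}) for isomorphism--closure. The one place you diverge is the lemma you single out as the substantive point --- that a direct summand of a strong decomposition is again a strong decomposition --- which the paper does not reprove here but takes from the statement at the start of Section 3 (via the uniqueness property of strong decompositions, \cite[Theorem 9.9]{Fuchs}): any decomposition of $\bigoplus_{J}J^{n_J}$ has summands of the form $\bigoplus_J J^{m_J}$.

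Your proof of that lemma has a genuine gap. You fix the choice $J(G)=G$ and then read Proposition \ref{useful}(1) as providing \Jo subgroups $J(H)\subseteq H$, $J(K)\subseteq K$ with the \emph{equality} $G=J(G)=J(H)\oplus J(K)$. But \Jo subgroups are not unique as subgroups, and Proposition \ref{useful}(1) only says that $J(H)\oplus J(K)$ is a (i.e.\ can be taken as the) \Jo subgroup of $G$; it does not say that a previously fixed \Jo subgroup of $G$ splits along $H$ and $K$. That stronger reading is false in general: in $\bbZ\oplus\bbZ$ the \Jo subgroup $\la(1,1)\ra\oplus\la(1,-1)\ra$ is not of the form $J(H)\oplus J(K)$ for the coordinate summands. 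For your particular choice $J(G)=G$, the asserted equality is exactly equivalent to the conclusion you are trying to prove, so the argument is circular. This is precisely why the paper, in the proof of Proposition \ref{uniquenes}, writes $D=J(D)\cong J(D\pi_A)\oplus J(D\pi_C)$ with an isomorphism rather than an equality. And with only the isomorphism, your argument yields merely that $H$ contains a \Jo subgroup of finite index --- which every group in $\calG$ does --- and that does not force $H\in\oplus\calJ$: an indecomposable almost completely decomposable group of rank $2$ is a finite extension of a completely decomposable \Jo subgroup yet is not a strong decomposition. The repair is to invoke, as the paper does, the uniqueness property of strong decompositions to conclude that the summands $K$ and $L$ of $H=K\oplus L\in\oplus\calJ$ are themselves of the form $\bigoplus_J J^{m_J}$; once that is in hand, the rest of your argument (quotient splitting, heredity of finiteness and reducedness, matching the si summands) goes through as written.
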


\begpr  Let $G\in\oplus\calJ$ and $\theta\colon G\to H$ an isomorphism.  Then by \cite[Section 6]{Schultz2020} $H\in\oplus\calJ$.  If $G\in\oplus\calJ^{(fq)}$ or  $\oplus \calJ^{(rq)}$ then by Lemma \ref{let J}, $H\in\oplus\calJ^{(fq)}$  or $\oplus \calJ^{(rq)}$ respectively.

The closure of each of the sets under direct sums  follows directly from the definitions.  

Let $H=K\oplus L\in\oplus \calJ$.  Then $H$ has a basis $B=C\cup D$ in which $C$ is a basis of $K$ and $D$ a basis of $L$. Hence $H/(B)_*\cong K/(C)_*\oplus L/(D)_*$ and $K\in\oplus\calJ$. Furthermore,  $K\in\oplus \calJ^{(fq)}$ or $\oplus\calJ^{(rq)}$ if   $H$ is.
\enpr
 
To  show that  we have a proper hierarchy, we introduce the following notation: 

Let $\bbP=\{P_0,\cdots, P_n\}$ be a partition of the set of primes into $n+1$ infinite sets. For $i\in [n]$, let $\tau_i$ be the type which is infinite for all $p\in P_i$ and zero elsewhere. Let $\{v_i\colon i\in[n]\}$ be independent elements in $V$, and $U=\bigoplus_i [v_i]\cong \bbQ^n$.

 For all $i\in[n]$, let $b_i=\tau_i v_i,\  (B)_*=\bigoplus_i b_{i*}$. and $T=U/(B)_*$. 

\begprop For every $n\in\bbN$ there exist $H$ in $ \calJ^{(rq)}\setminus \calJ^{(fq)}$ and  $K\in \calJ\setminus  \calJ^{(rq)}$, each of rank $n$.

\enprop

\begpr With the notation above,  $T$ contains a summand isomorphic to $ \bigoplus_{p\in P_0}\bbZ(p^\infty)$ and hence a subgroup $A\cong \bigoplus_{p\in P_0}\bbZ(p)$ and for each $q\in P_0$  a summand $A_q\cong \bbZ(q^\infty)$. Note that no element of $(B)_*$ is divisible by all $p\in P_0$ and no element has infinite $q$--height.

Let $H= A\eta^{-1}$ and $K_q=A_q\eta^{-1}$. Then by Proposition \ref{useful} (2), $H$ and $K_q$ are strongly indecomposable with range $(B)_*$ and $H/(B)_*\cong A\in \calJ^{rq}\setminus \calJ^{fq}$ and $K_q/(B)_*\in \calJ \setminus \calJ^{rq}$ as required.
\enpr
 We  use the hierarchy to define the Torsion Quotient Decomposition. Let 
 \begin{itemize}\item $\calG^{(fq)}=\{G\in\calG\colon J(G)\in\oplus\calJ^{(fq)}\}$

\item $\calG^{(rq)}=\{G\in\calG\colon J(G)\in\oplus\calJ^{(rq)}\setminus\oplus\calJ^{(fq)}\}$
 
 \item $\calG^{(dq)}=\{G\in\calG\colon J(G)\in\oplus\calJ\setminus\oplus\calJ^{(rq)}\}$
\end{itemize}

Note that    $\calG^{(fq)}$ is the set of acd groups and all torsion quotients of each $G\in\calG^{(dq)}$ contain a divisible summand.

\begprop\label{5.4}   The sets $\calG^{(fq)},\ \calG^{(rq)}$ and  
 $\calG^{(dq)}$ form a partition of $\calG$ and are closed under     isomorphism,  direct sums and  direct summands;
\enprop 
 \begpr  That they form a partition of $\calG$ is clear by definition, and their closure properties follow from Proposition \ref{useful} (1) and  Proposition \ref{hierarchy}.   
   \enpr

 A \textit{Torsion Quotient Decomposition of $G$} is a decomposition \[G=G^{(fq)}\oplus G^{(rq)}\oplus G^{(dq)}\] where $G^{(fq)}\in\calG^{(fq)},\  G^{(rq)}\in\calG^{(rq)}$ and $G^{(dq)}\in\calG^{(dq)}$.

 \begin{theorem}\label{principal} Let $G\in\calG$. Then $G$ has a Torsion Quotient Decomposition. 
 
 If $G=G^{(fq)}\oplus G^{(rq)}\oplus G^{(dq)}  = H^{(fq)}\oplus H^{(rq)}\oplus H^{dq)}$ are Torsion Quotient Decompositions of $G$, then  $G^{(fq)}\cong H^{(fq)},\  G^{rq)}  \cong H^{(rq)}$ and $G^{(dq)}\cong H^{(dq)}$.  \end{theorem}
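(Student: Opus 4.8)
The plan is to obtain existence from a complete decomposition and uniqueness by descending to J\'onsson subgroups. For existence, fix a complete decomposition $G=\bigoplus_{k}G_k$ into indecomposable summands. By Proposition~\ref{5.4} the classes $\calG^{(fq)}$, $\calG^{(rq)}$, $\calG^{(dq)}$ partition $\calG$, so each $G_k$ belongs to exactly one of them; let $G^{(fq)}$, $G^{(rq)}$, $G^{(dq)}$ be the direct sums of those $G_k$ lying in $\calG^{(fq)}$, in $\calG^{(rq)}$, and in $\calG^{(dq)}$ respectively. Since the three classes are closed under direct sums (Proposition~\ref{5.4}), these summands lie in their respective classes, and $G=G^{(fq)}\oplus G^{(rq)}\oplus G^{(dq)}$ is a Torsion Quotient Decomposition.

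For uniqueness, suppose $G=G^{(fq)}\oplus G^{(rq)}\oplus G^{(dq)}=H^{(fq)}\oplus H^{(rq)}\oplus H^{(dq)}$ are two Torsion Quotient Decompositions. Applying the additive functor $J$ (Proposition~\ref{useful}(1)) gives two strong decompositions of $J(G)$,
\[J(G)=J(G^{(fq)})\oplus J(G^{(rq)})\oplus J(G^{(dq)})=J(H^{(fq)})\oplus J(H^{(rq)})\oplus J(H^{(dq)}),\]
in which, by the definitions of the classes, the $(fq)$-terms lie in $\oplus\calJ^{(fq)}$, the $(rq)$-terms in $\oplus\calJ^{(rq)}\setminus\oplus\calJ^{(fq)}$, and the $(dq)$-terms in $\oplus\calJ\setminus\oplus\calJ^{(rq)}$. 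Fix a strong decomposition $J(G)=\bigoplus_i E_i$; by uniqueness of strong decompositions the isomorphism types and multiplicities of the $E_i$ are invariants of $G$, and, using the hierarchy of Proposition~\ref{hierarchy}, they split canonically into the rank--$1$ summands, those with infinite reduced torsion quotient, and those whose torsion quotient has a divisible part, with direct sums $N_1$, $N_2$, $N_3$, so that $J(G)\cong N_1\oplus N_2\oplus N_3$. The key assertion is that every Torsion Quotient Decomposition is \emph{saturated}: $J(G^{(fq)})\cong N_1$, $J(G^{(rq)})\cong N_2$, $J(G^{(dq)})\cong N_3$, and similarly for the $H$'s. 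Granting this, the corresponding components of the two decompositions have isomorphic J\'onsson subgroups; Proposition~\ref{useful}(2) then transfers the splittings back to $G$ (the relevant purifications are direct summands because both decompositions of $J(G)$ come from direct decompositions of $G$), and Corollary~\ref{basisinv} together with Proposition~\ref{let J} guarantees that the torsion--quotient conditions distinguishing the three classes are read off correctly from any convenient range; hence $G^{(fq)}\cong H^{(fq)}$, $G^{(rq)}\cong H^{(rq)}$, $G^{(dq)}\cong H^{(dq)}$.

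I expect the saturation assertion to be the main obstacle, i.e. excluding the a priori possibility that a rank--$1$ or an infinite--reduced--torsion--quotient strongly indecomposable summand of $J(G)$ gets absorbed into $J(G^{(dq)})$ or $J(G^{(rq)})$. Proposition~\ref{5.4} already provides that, the three classes being disjoint and closed under direct summands, no nonzero acd group is a direct summand of $G^{(rq)}$ or $G^{(dq)}$ and no nonzero group of $\calG^{(rq)}$ is a direct summand of $G^{(dq)}$; what remains is a local, single--prime height and divisibility analysis showing that a misplaced summand would contradict membership of $J(G^{(dq)})$ in $\oplus\calJ\setminus\oplus\calJ^{(rq)}$ or of $J(G^{(rq)})$ in $\oplus\calJ^{(rq)}$. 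I would organise the argument one level at a time: first split off the $\calG^{(dq)}$-part by restricting the projections $\pi^{(fq)},\pi^{(rq)},\pi^{(dq)}$ of the second decomposition to $J(G^{(dq)})$, which is their internal direct sum of images, forcing the images in $J(H^{(fq)})$ and $J(H^{(rq)})$ to vanish and then deducing $G^{(dq)}\cong H^{(dq)}$ from the symmetric statement together with finiteness of rank (this parallels the proof of Proposition~\ref{uniquenes}); afterwards, peel off this component and repeat inside $\calG^{(fq)}\cup\calG^{(rq)}$ with $\calG^{(fq)}$ in the role previously played by $\calG^{(dq)}$, which is in essence the Main Decomposition argument of \cite{MS2018}. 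A secondary point to monitor is that this comparison yields genuine isomorphism and not merely near isomorphism (in contrast with $G_{ni}$ in a Principal Decomposition); this should follow because the comparison of strong decompositions is itself an isomorphism statement and the lifts supplied by Proposition~\ref{useful}(2) are pinned down inside the fixed ambient group $G$.
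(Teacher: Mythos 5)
Your existence argument (take a complete decomposition and sort the indecomposable summands using the partition and closure properties of Proposition~\ref{5.4}) is fine and is essentially the paper's argument in different dress; the paper instead chooses a $\calG^{(fq)}$-summand of maximal rank and repeats inside a complement, but both routes rest on the same closure facts. The skeleton of your uniqueness argument --- pass to J\'onsson subgroups, invoke uniqueness of strong decompositions, pull the identification back to $G$ --- is also the paper's. The differences lie in how the two delicate points are handled, and there your write-up has genuine gaps.

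First, the ``saturation'' assertion, which you single out as the main obstacle and only sketch via a projection argument modelled on Proposition~\ref{uniquenes}: in the paper this is a one-line consequence of the uniqueness of strong decompositions (\cite[Theorem 9.9]{Fuchs}). Each of $J(G^{(fq)})$, $J(G^{(rq)})$, $J(G^{(dq)})$ is a strong decomposition whose strongly indecomposable constituents are of three mutually exclusive kinds (rank one, by Proposition~\ref{letJ}; infinite reduced torsion quotient; torsion quotient with a divisible summand), so matching the two strong decompositions of $J(G)$ summand by summand already forces $J(G^{(fq)})\cong J(H^{(fq)})$, $J(G^{(rq)})\cong J(H^{(rq)})$, $J(G^{(dq)})\cong J(H^{(dq)})$; no absorption analysis or peeling argument is needed, and in any case you never actually carry yours out --- you ``grant'' the key claim. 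Second, and more seriously, your final step does not follow from what you cite: isomorphism of the J\'onsson subgroups gives only $G^{(fq)}\approx H^{(fq)}$, since the parts are the purifications of these full finite-index subgroups and purification is not determined by the abstract isomorphism type of the subgroup (non-isomorphic, nearly isomorphic acd groups sharing a regulating subgroup are the standard warning). Proposition~\ref{useful}(2) only tells you when a decomposition of $J(G)$ lifts to $G$, and Corollary~\ref{basisinv} and Proposition~\ref{let J} only say that class membership is basis-independent; none of this upgrades quasi-isomorphism to isomorphism. The paper closes this point by appealing to the fact that the corresponding parts are pure summands of the one group $G$; terse as that is, it is exactly the step your argument omits, so as written your proof identifies the three parts only up to quasi- (or near-) isomorphism rather than up to isomorphism as the theorem asserts.
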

\begpr 
 Let $G\in\calG$. Let $G^{(fq)}$ be a $\calG^{(fq)}$ summand of $G$ of maximal rank, and let $D$ be any complementary summand, so that $D\in \calG^{(rq)}\cup\calG^{(dq)}$; let $G^{(rq)}$ be a $\calG^{rq)}$ summand of $D$ of maximal rank, and let $G^{(dq)}$ be any complementary summand of $G^{(fq)}\oplus\calG^{(rq)}$, so that $G^{(dq)}\in\calG^{(dq)}$ and  $G=G^{(fq)}\oplus G^{(rq)}\oplus G^{(dq)}$.
 
 Since \Jo subgroups are unique up to isomorphism, $J(G)\cong J(G^{(fq)})\oplus J(G^{(rq)})\oplus J(G^{(dq)})\cong J(H^{(fq)})\oplus J(H^{(rq)})\oplus J(H^{(dq)})$ with   $J(G^{(fq)})$ and $J(H^{(fq)})\in\oplus\calG^{(fq)},\ J(G^{(rq)})$ and $J(H^{(rq)})\in\calG^{(rq)}$ and $J(G^{(dq)})$ and $J(H^{(dq)})\in\calG^{(dq)}$.
 
 Thus $J(G^{(fq)})\cong J(H^{(fq)}) $ and hence $G^{(fq)}\approx H^{(fq)}$. Since summands of  $G$ are pure,     $G^{(fq)}\cong H^{(fq)}$, and similarly for the other summands, 
 $ G^{(rq)}  \cong H^{(rq)}$ and $G^{(dq)}\cong H^{(dq)}$.  
 \enpr
   \begin{remark}  It follows from the definitions that the Torsion Quotient Decomposition of $G$ respects direct sums, in the sense that if $G=H\oplus K$ then $G^{(fq)}=H^{(fq)}\oplus K^{(fq)},\ G^{(rq)}=H^{(rq)}\oplus K^{(rq)}$ and $G^{(dq)}=H^{(dq)}\oplus K^{(dq)}$ 
 \end{remark}
\subsection{The Principal and Torsion Quotient Decompositions are Compatible}
Since the decompositions are independent of each other,  both can be applied, to achieve canonical decompositions with 6 terms.  We show that these terms are independent of the order in which the decompositions are applied.

\begprop Let $G\in\calG$ and let $*\in\{fq,\,rq,\,dq\}$. Then $(G_{sd})^{(*)}=(G^{(*)})_{sd}$ and $(G_{ni})^{(*)}=(G^{(*)})_{ni}$

\enprop

\begpr The subgroup $G_{sd}\cap G^{(*)}$ of $G$ can be considered as both a maximal summand of $G_{sd}$ contained in $\calG^{(*)}$, i.e., $(G_{sd})^{(*)}$ and a maximal summand of $G^{(*)}$ contained in $\calG_{sd}$, i.e.
$(G^{(*)})_{sd}$.

Consequently, $G_{ni}\cap G^{(*)}$   can be considered as both a complement of a maximal summand of $G_{sd}$ contained in $\calG^{(*)}$, i.e., $(G_{ni})^{(*)}$ and  a complement of a maximal summand of $G^{(*)}$ contained in $\calG_{sd}$, i.e.
$(G^*)_{ni}$.
\enpr

\section{Ranked Atomic Categories}

In the preceding development, we used no properties of $\calG$ other than the fact that  it forms a \textit{ranked atomic category} in the following sense:

Let $\calC$ be an additive category having a set of indecomposable objects $\calD$.
\begin{enumerate}\item $\calC$  is
 \textit{atomic} if  every object is a coproduct of finitely many  objects in $\calD$;
 \item   $\calC$ is \textit{ranked} if there is  a function $\rank\colon \text{objects of }\calC\to \bbN$ satisfying 
 \begin{itemize}\item$\rank(H)=0$ if and only if $H=\{0\}$,
 \item $H\cong K$ implies $\rank(H)=\rank(K)$, and 
 \item$\rank(H\oplus K)=\rank(H)+\rank(K)$.
 \end{itemize}
\end{enumerate}

The category of finite rank modules over an integral domain, and more generally categories  of  modules with a composition series  over an arbitrary ring,    are examples of ranked atomic categories.

 Let $\calC$ be a ranked atomic category, and   $\calD$    the set of indecomposable objects of $\calC$. For any subset $\calA$ of $\calD$ let $\Add(\calA)$ be the closure of $\calA$ under isomorphism, direct sums and summands.

A partition $\{\calA,\,\calB\}$  of $\calD$ is called \textit{summand disjoint} if  no non--zero  element of $\Add(\calA)$ is isomorphic to   an element of $\Add(\calB)$ and \textit{vice versa}.

\begin{proposition}\label{atomic}
Let $\calC$ be a ranked atomic category and $\{\calA,\,\calB\}$ a summand disjoint partition of the indecomposable objects of $\calC$. 
 Then \begin{enumerate} \item every object $G$ of $\calC$ is a direct sum $G=A\oplus B$ with $A\in \Add(\calA)$   and $B\in \Add(\calB)$; 
\item if also $G=A'\oplus B'$ with $A'\in \Add(\calA)$   and $B'\in \Add(\calB)$, then    $A\cong A'$  and $B\cong B'$.
 \end{enumerate}
\end{proposition}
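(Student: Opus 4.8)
The plan is to obtain (1) directly from atomicity and (2) from an exchange--type argument in which summand disjointness plays the part that uniqueness of J\'onsson subgroups plays in the proof of Proposition~\ref{uniquenes}, and in which the function $\rank$ replaces the appeal there to finiteness of rank.

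For (1): by atomicity $G\cong C_1\oplus\dots\oplus C_n$ with each $C_i\in\calD$; after reindexing so that $C_1,\dots,C_m\in\calA$ and $C_{m+1},\dots,C_n\in\calB$, set $A=C_1\oplus\dots\oplus C_m$ and $B=C_{m+1}\oplus\dots\oplus C_n$ (an empty direct sum meaning $\{0\}$), so that $A\in\Add(\calA)$, $B\in\Add(\calB)$ and $G=A\oplus B$. Before (2) I would isolate the one substantive consequence of summand disjointness: \emph{no non--zero object of $\Add(\calA)$ is isomorphic to a direct summand of an object of $\Add(\calB)$, and conversely}, since such a common summand would be a non--zero object of $\Add(\calA)\cap\Add(\calB)$. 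In particular $\Add(\calA)\cap\Add(\calB)=\{0\}$, and every indecomposable direct summand of an object of $\Add(\calA)$ lies in $\calA$ (dually for $\calB$), so $\Add(\calA)$ is precisely the class of objects all of whose indecomposable constituents lie in $\calA$.

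For (2): suppose $G=A\oplus B=A'\oplus B'$ with $A,A'\in\Add(\calA)$ and $B,B'\in\Add(\calB)$, and write $\iota\colon A\to G$, $\pi\colon G\to A$ for the inclusion and projection of the first decomposition and $\iota'\colon A'\to G$, $\pi'\colon G\to A'$, $\iota''\colon B'\to G$, $\pi''\colon G\to B'$ for those of the second. Since the idempotents $\iota'\pi'$ and $\iota''\pi''$ sum to $1_G$, the identity $\pi(\iota'\pi'+\iota''\pi'')\iota=\pi\iota=1_A$ breaks up as $1_A=\phi+\psi$, where $\phi=\pi\iota'\pi'\iota\colon A\to A$ factors through $A'$ and $\psi=\pi\iota''\pi''\iota\colon A\to A$ factors through $B'$. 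Now decompose $A=D_1\oplus\dots\oplus D_m$ into indecomposables; each $D_i$ lies in $\calA$ by the reduction above, and the same computation applied to the summand $D_i$ of $G$ yields $1_{D_i}=\phi_i+\psi_i$ with $\psi_i$ factoring through $B'$. If $\psi_i$ were invertible it would split $D_i$ off $B'$ as a direct summand, which the reduction forbids; hence $\psi_i$ is a non--unit of $\End(D_i)$, so $\phi_i=1_{D_i}-\psi_i$ is a unit, and then the morphism $D_i\to A'$ through which $\phi_i$ factors is a split monomorphism, exhibiting $D_i$ as a direct summand of $A'$. Applying the finite exchange property $m$ times, matching each $D_i$ to an indecomposable summand of $A'$ (never of $B'$, by summand disjointness), promotes this to the statement that $A$ is isomorphic to a direct summand of $A'$; by the symmetric argument $A'$ is isomorphic to a direct summand of $A$, and since $\rank$ is additive and isomorphism invariant the two complements have rank $0$ and hence vanish, so $A\cong A'$. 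Interchanging $\calA$ and $\calB$ throughout gives $B\cong B'$.

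The step I expect to be the main obstacle is the implication ``$\psi_i$ a non--unit $\Rightarrow$ $\phi_i$ a unit'', which asserts that the endomorphism ring of an indecomposable object of $\calC$ behaves like a local ring; this is the categorical residue of the structural facts about strongly indecomposable groups -- uniqueness of strong decompositions, \cite[Theorem~9.9]{Fuchs}, and endomorphism rings being local modulo their nilradical -- that make Proposition~\ref{uniquenes} run. It holds automatically when every object of $\calC$ has finite length, by Fitting's Lemma, and for finite--rank torsion--free modules it is underwritten by the J\'onsson--subgroup arguments already used in Section~3; once it is granted, all that remains is the routine bookkeeping of the finite exchange property needed to pass from ``each $D_i$ splits off $A'$'' to ``$A$ splits off $A'$''.
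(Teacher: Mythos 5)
Your part (1) is fine and matches the spirit of the paper's argument (the paper picks a maximal--rank summand in $\Add(\calA)$; you sort an atomic decomposition of $G$ -- either works). The problem is in (2): your exchange argument stands or falls on the step ``$\psi_i$ a non--unit $\Rightarrow\ \phi_i=1_{D_i}-\psi_i$ a unit'', i.e.\ on $\End(D_i)$ being local for every indecomposable object $D_i$. That is not one of the axioms of a ranked atomic category, and -- this is the decisive point -- it is false in the motivating category $\calG$ of finite rank torsion--free groups: indecomposable such groups need not have local (or even semilocal--modulo--radical in the needed sense) endomorphism rings, which is exactly why complete decompositions in $\calG$ are non--unique (the rank~$4$ example $1+3=2+2$ quoted in the paper). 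Your fallback justifications do not repair this: Fitting's Lemma needs finite length, which objects of $\calG$ do not have, and the \Jo --subgroup results of Section~3 give locality only for the \emph{quasi}--endomorphism rings of \emph{strongly} indecomposable groups, not for $\End$ of arbitrary indecomposables. So as written your argument proves the proposition only in a Krull--Schmidt (Azumaya) setting, where uniqueness of decompositions holds outright and summand disjointness is scarcely needed; it does not cover the situation the proposition is designed for.

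The paper's route is different and avoids any endomorphism--ring condition: summand disjointness is used directly to argue that an indecomposable summand $C$ of $A$, being a summand of $G=A'\oplus B'$ with $C\in\calA$, cannot contribute to $B'$ and hence is (isomorphic to) a summand of $A'$; iterating gives that $A$ is isomorphic to a summand of $A'$ and symmetrically $A'$ to a summand of $A$, and then the additivity and isomorphism--invariance of $\rank$ force $A\cong A'$ (and likewise $B\cong B'$). If you want to keep your idempotent computation $1_{D_i}=\phi_i+\psi_i$, you must extract the conclusion from summand disjointness itself (as the paper does), not from a locality assumption on $\End(D_i)$; otherwise you should state the extra hypothesis explicitly, at the cost of losing the intended application to $\calG$.
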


\begin{proof}  (1) This is clear if $G\in\Add(\calA)$ or $\Add(\calB)$, so we may assume $G$ has summands from both $\Add(\calA)$ and $\Add(\calB)$. Let $\calC$ be the set of all  summands of  $G$  which are elements of $\Add(\calA)$ and let $A$ be an element of $\calC$  of maximal rank, so 
$G=A\oplus B$ where $0\ne B$ has no summand isomorphic to an element of  $ \Add(\calA)$. Then every indecomposable summand of $B$ is in $\calB$, so $B\in\Add( \calB)$.

(2) Each of $A,\,A',\,B$ and $B'$ is a direct sum of indecomposables.  Let $C$ be an indecomposable summand of $A$, so $C\in\calA$.  Now $C\cong D\oplus E$ with $D\leq A'$ and $E\leq B'$. Since $C$ is indecomposable and $B'$ has no summand from $\calA,\ C\cong D$. Thus every indecomposable summand of $A$, and hence $A$ itself is isomorphic to a summand of $A'$. 
 
By a symmetrical argument, $A'$ is isomorphic to a summand of $A$; and similarly, $B$ is isomorphic to a summand of $B'$ and $B'$ to a summand of $B$.
  Since rank is preserved by isomorphism,   $A\cong A'$ and $B\cong B'$.
\end{proof}

 It is straightforward to extend the definition of summand disjoint partition to partitions with more than two parts and to prove the corresponding analogues of Proposition \ref{atomic} of such partitions. 
Thus we conclude:

\begin{theorem} \label{main} Let $\calC$ be a ranked atomic category   and $\{\calA_i\colon i\in[n]\}$   a summand disjoint partition of its indecomposable objects. Then every  object $G$ of $\calC$ has a decomposition $G=\bigoplus_{i\in[n]}G_i$ with $G_i\in\Add(\calA_i)$ 
   which is  unique up to  isomorphism.
\end{theorem}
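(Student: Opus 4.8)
The plan is to reduce Theorem \ref{main} to an $n$-fold iteration of Proposition \ref{atomic}, which already handles the case $n=2$. First I would observe that a summand disjoint partition $\{\calA_1,\dots,\calA_n\}$ of $\calD$ gives rise, for each $i$, to a two-part summand disjoint partition $\{\calA_i,\ \bigcup_{j\ne i}\calA_j\}$: one must check that no non-zero element of $\Add(\calA_i)$ is isomorphic to an element of $\Add(\bigcup_{j\ne i}\calA_j)$. Since every object of $\Add(\bigcup_{j\ne i}\calA_j)$ decomposes into indecomposables, each lying in some $\calA_j$ with $j\ne i$, an isomorphism with an object of $\Add(\calA_i)$ would, by the atomic property and preservation of indecomposability under isomorphism, force some indecomposable of $\calA_i$ to be isomorphic to one of $\calA_j$, contradicting summand disjointness of the original partition. (The same argument shows more generally that any partition of $\calD$ obtained by amalgamating blocks of a summand disjoint partition is again summand disjoint.)

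Next I would construct the decomposition by induction on $n$, the case $n=1$ being trivial and $n=2$ being Proposition \ref{atomic}. Given $G$, apply Proposition \ref{atomic} to the two-part partition $\{\calA_n,\ \bigcup_{j<n}\calA_j\}$ to get $G=G_n\oplus G'$ with $G_n\in\Add(\calA_n)$ and $G'\in\Add(\bigcup_{j<n}\calA_j)$. Now the restricted partition $\{\calA_1,\dots,\calA_{n-1}\}$ is a summand disjoint partition of the indecomposable objects of the full subcategory $\Add(\bigcup_{j<n}\calA_j)$, which is itself a ranked atomic category under the inherited rank function; by the inductive hypothesis $G'=\bigoplus_{j<n}G_j$ with $G_j\in\Add(\calA_j)$, and concatenating gives $G=\bigoplus_{i\in[n]}G_i$.

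For uniqueness, suppose $G=\bigoplus_{i\in[n]}G_i=\bigoplus_{i\in[n]}G_i'$ with $G_i,G_i'\in\Add(\calA_i)$. Fix $i$; then $G=G_i\oplus\bigl(\bigoplus_{j\ne i}G_j\bigr)=G_i'\oplus\bigl(\bigoplus_{j\ne i}G_j'\bigr)$ are two decompositions of $G$ relative to the two-part summand disjoint partition $\{\calA_i,\ \bigcup_{j\ne i}\calA_j\}$, with $\bigoplus_{j\ne i}G_j,\ \bigoplus_{j\ne i}G_j'\in\Add(\bigcup_{j\ne i}\calA_j)$. By part (2) of Proposition \ref{atomic}, $G_i\cong G_i'$. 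Since $i$ was arbitrary, the decomposition is unique up to isomorphism.

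The only genuine point requiring care—the ``main obstacle''—is the first step: verifying that amalgamating blocks of a summand disjoint partition preserves summand disjointness, and that $\Add(\calB)$ for a sub-collection $\calB\subseteq\calD$ is again a ranked atomic category whose indecomposables are exactly $\calB$ (so that the induction is legitimate). Both facts follow cleanly from the atomic axiom together with the stability of indecomposability under isomorphism, but they are precisely what makes the two-part proposition bootstrap to the general case, so they deserve explicit mention rather than being swept into ``it is straightforward''.
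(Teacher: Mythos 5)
Your overall route is exactly the paper's: the author proves the two-block case as Proposition \ref{atomic} and disposes of Theorem \ref{main} with the words ``routine induction'', which is precisely the induction you carry out, and your existence and uniqueness steps are a faithful, more detailed version of that sketch.

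However, the one step you rightly single out as the main obstacle is not actually justified as you have written it. You argue that the amalgamated partition $\{\calA_i,\ \bigcup_{j\ne i}\calA_j\}$ is summand disjoint because ``every object of $\Add(\bigcup_{j\ne i}\calA_j)$ decomposes into indecomposables, each lying in some $\calA_j$ with $j\ne i$''. Atomicity only gives a decomposition into indecomposables of $\calD$, each of which lies in $\Add(\bigcup_{j\ne i}\calA_j)$ because that class is closed under summands; to conclude that such an indecomposable is isomorphic to a member of some $\calA_j$ with $j\ne i$ you would need to know that every indecomposable object of $\Add(\calS)$ is isomorphic to an element of $\calS$ for an arbitrary subset $\calS\subseteq\calD$, and that fails in categories without unique decomposition: in $\calG$ itself there are pairwise non-isomorphic indecomposables with $A_1\oplus A_2\cong B_1\oplus B_2$, so $B_1$ is an indecomposable of $\Add(\{A_1,A_2\})$ not isomorphic to $A_1$ or $A_2$. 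Consequently, if ``summand disjoint'' for $n$ blocks is read merely pairwise (no nonzero object of $\Add(\calA_i)$ isomorphic to one of $\Add(\calA_j)$ for $i\ne j$), your amalgamation claim is exactly what needs proving and the argument offered for it is circular. The repair is to adopt (as the paper surely intends by its ``straightforward'' extension of the definition) the stronger condition: for each $i$, no nonzero element of $\Add(\calA_i)$ is isomorphic to an element of $\Add(\bigcup_{j\ne i}\calA_j)$. With that definition your first step is immediate, the indecomposables of the subcategory $\Add(\bigcup_{j<n}\calA_j)$ are exactly $\bigcup_{j<n}\calA_j$ (an indecomposable of that subcategory lying in $\calA_n$ would violate the condition for $i=n$), the restricted partition inherits the condition, and the remainder of your induction and your uniqueness argument go through verbatim.
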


\begin{proof}

Both existence and uniqueness  follow by a routine induction from   Proposition \ref{atomic}.
 \end{proof}

 \end{document}